\documentclass[a4paper,british]{article}
\usepackage[T1]{fontenc}
\usepackage[latin9]{inputenc}
\usepackage{amsmath}
\usepackage{amsthm}
\usepackage{amssymb}
\usepackage[numbers]{natbib}
\usepackage{hyperref}
\usepackage{changes} 
\usepackage{subeqnarray}
\definechangesauthor[name={Mathias}, color=blue]{mat}
\definechangesauthor[name={Markus}, color=blue]{mb}

\makeatletter


\theoremstyle{plain}
\newtheorem{theorem}{\protect\theoremname}
\theoremstyle{plain}
\newtheorem{proposition}[theorem]{\protect\propositionname}
\theoremstyle{plain}
\newtheorem{corollary}[theorem]{\protect\corollaryname}
\theoremstyle{plain}
\newtheorem{lemma}[theorem]{\protect\lemmaname}

\usepackage{a4wide}
\usepackage{bbm}

\allowdisplaybreaks

\makeatother

\usepackage{babel}
\providecommand{\corollaryname}{Corollary}
\providecommand{\lemmaname}{Lemma}
\providecommand{\propositionname}{Proposition}
\providecommand{\theoremname}{Theorem}
\newcommand{\KLEINO}{{\scriptstyle{\mathcal{O}}}}

\begin{document}
\global\long\def\phi{\varphi}%
 
\global\long\def\epsilon{\varepsilon}%
 
\global\long\def\eps{\varepsilon}%
 
\global\long\def\theta{\vartheta}%
\global\long\def\E{\mathbb{E}}%
\global\long\def\Var{\operatorname{Var}}%
 
\global\long\def\Cov{\operatorname{Cov}}%
 
\global\long\def\N{\mathbb{N}}%
 
\global\long\def\Z{\mathbb{Z}}%
 
\global\long\def\R{\mathbb{R}}%
 
\global\long\def\F{\mathcal{F}}%
 
\global\long\def\le{\leqslant}%
 
\global\long\def\ge{\geqslant}%
 
\global\long\def\MT{\ensuremath{\clubsuit}}%
 
\global\long\def\1{\mathbbm{1}}%
 
\global\long\def\d{\mathrm{d}}%
 
\global\long\def\P{\mathbb{P}}%
 
\global\long\def\subset{\subseteq}%
 
\global\long\def\supset{\supseteq}%
 
\global\long\def\argmin{\operatorname*{arg\, min}}%

\global\long\def\rank{\operatorname{rank}}%
 
\global\long\def\diag{\operatorname{diag}}%
 
\global\long\def\arginf{\operatorname*{arg\, inf}}%
 
\global\long\def\bull{{\scriptstyle \bullet}}%
 
\global\long\def\supp{\operatorname{supp}}%
 
\global\long\def\sgn{\operatorname{sign}}%
 
\global\long\def\tr{\operatorname{tr}}%
 
\global\long\def\spn{\operatorname{span}}%
 
\global\long\def\ran{\operatorname{ran}}%
 
\global\long\def\Id{\operatorname{Id}}%
\global\long\def\Bigtimes{\bigtimes}%

\title{On central limit theorems for power variations of the solution to the stochastic heat equation}
\author{Markus Bibinger$^a$ and Mathias Trabs$^b$}
\date{}

\maketitle

\vspace*{-.4cm}
\emph{\small $^a$Fachbereich12 Mathematik und Informatik, Philipps-Universität Marburg, bibinger@uni-marburg.de}

\emph{\small $^b$Fachbereich Mathematik, Universität Hamburg, mathias.trabs@uni-hamburg.de}

\begin{abstract}
We consider the stochastic heat equation whose solution is observed
discretely in space and time. An asymptotic analysis of power variations is presented including the proof of a central limit theorem. It generalizes the theory from Bibinger and Trabs \cite{BibingerTrabs2017} in several directions.
\end{abstract}
\vspace{.25cm}

\noindent Keywords: central limit theorem; mixing; stochastic partial differential equation; power variations.\\
MSC 2010 classification: 62M10, 60H15

\section{Introduction and main result}
\label{sec:1}
Stochastic partial differential equations (SPDEs) do not only provide key models in modern probability theory, but also become increasingly popular
in applications, for instance, in neurobiology or mathematical finance. Consequently, statistical methods are required to calibrate SPDE models from given observations. However, in the statistical literature on SPDEs, see \cite{Cialenco2018} for a recent review, there are still basic questions which are not yet settled.

A natural problem is parameter estimation based on discrete observations of a solution of an SPDE which was first studied in \cite{Markussen2003} and which has very recently attracted considerable interest. Applying similar methods the three related independent works \cite{cialenco,BibingerTrabs2017,chong} study parabolic SPDEs including the stochastic heat equation, consider high-frequency observations in time, construct estimators using power variations of time-increments of the solution and prove central limit theorems. As we shall see below, the marginal solution process along time at a fixed spatial point is not a (semi-)martingale such that the well-established high-frequency theory for stochastic processes from \cite{JP} cannot be (directly) applied. In view of this difficulty, different techniques are required to prove central limit theorems. Interestingly, the proof strategies in \cite{cialenco,BibingerTrabs2017,chong} are quite different. Cialenco and Huang \cite{cialenco} consider the realised fourth power variation for the stochastic heat equation with both an unbounded spatial domain $\mathcal{D}=\R$, or a bounded spatial domain $\mathcal{D}=[0,\pi]$. In the first setting they apply the central limit theorem by Breuer and Major \cite{BreuerMajor1983} for stationary Gaussian sequences with sufficient decay of the correlations. For $\mathcal{D}=[0,\pi]$, they use Malliavin calculus instead and the fourth moment theorem from \cite{NualartOritz-Latorre2008}. Also in case of a bounded domain $\mathcal{D}=[0,1]$, with Dirichlet boundary conditions, Bibinger and Trabs \cite{BibingerTrabs2017} study the normalized discrete quadratic variation and establish its asymptotic normality building upon a theorem by Peligrad and Utev \cite{PeligradUtev1997} for triangular arrays which satisfy a covariance inequality related to $\rho$-mixing. Finally, Chong \cite{chong} has proved (stable) central limit theorems for power variations in the case $\mathcal{D}=\R$, based on a non-obvious martingale approximation in combination with the theory from \cite{JP}. The strategy of proofs by \cite{BibingerTrabs2017} and \cite{chong} do not directly rely on a purely Gaussian model and can be transferred to more general settings. While \cite{BibingerTrabs2017} considers further nonparametric inference on a time-varying deterministic volatility, \cite{chong} already provides a proof beyond the Gaussian framework including stochastic volatility.

This note presents a concise analysis which transfers the asymptotic theory from \cite{BibingerTrabs2017} to an unbounded spatial domain $\mathcal{D}=\R$, and from the normalized discrete quadratic variation to general power variations. Contrarily to \cite{BibingerTrabs2017}, we do not start with the illustration of a solution as an infinite-dimensional SDE but exploit the explicit representation of the solution with the heat kernel thanks to the continuous spectrum of the Laplace operator on the whole real line. We stick here to the simplest Gaussian setting to illustrate the main aspects and deviations from the classical theory. Our findings show that the central limit theorem under a $\rho$-mixing type condition used in \cite{BibingerTrabs2017} for the case with a bounded spatial domain can be used likewise for this different model with unbounded spatial domain. We moreover expect that it provides a perspective to prove central limit theorems very generally, although many approximation details, for instance, to address stochastic volatility, remain far from being obvious. We consider the \emph{stochastic heat equation} in one spatial dimension
\begin{equation}
\partial_{t}X_{t}(x)  =\frac{\theta}{2}\partial_{xx}X_{t}(x)+\sigma\dot{W}(t,x),\quad X_{0}(x)=\xi(x),\quad t>0,x\in\mathbb{R},\label{eq:heat}
\end{equation}
for space-time white noise $\dot{W}$, and with parameters $\theta,\sigma>0$, and some initial condition $\xi$ which is independent of $\dot{W}$.
$\dot{W}$ is defined as a centred Gaussian process with covariance structure $\E[\dot{W}(s,x)\dot{W}(t,y)]=\1_{s=t}\1_{x=y}$, and is in terms of a distribution the space-time derivative of a Brownian sheet. Since the Laplace operator on the whole real line does not have a discrete spectrum and we do not have to discuss boundary problems, the asymptotic analysis actually simplifies compared to \cite{BibingerTrabs2017} and allows for more transparent proofs. 

A mild solution of (\ref{eq:heat}) is a random field that admits the representation
\begin{equation}
X_{t}(x)=\int_{\R}G(t,x-y)\xi(y)dy+\int_{0}^{t}\int_{\R}G(t-s,x-y)\sigma\dot{W}(ds,dy)\,,\label{eq:mildSol}
\end{equation}
for $t\ge0,x\in\R$, where the integral is well-defined as the stochastic Walsh integral and with
\[
G(t,x):=\frac{\exp(-x^{2}/(2\theta t))}{\sqrt{2\pi\theta t}}\,.
\]
$G(t,x)$ is the heat kernel, the fundamental solution to the heat equation. Let us refer to \cite[Ch.~2.3.1]{lototsky} for an introduction to the heat equation and SPDEs in general. Suppose we observe this solution on a discrete grid $(t_{i},x_{k})_{i=0,\dots,n;k=1,\dots,m}\subset\R_{+}\times\R$, at equidistant observation times $t_{i}:=i\Delta_{n}$. We consider infill or high-frequency asymptotics where $\Delta_{n}\downarrow0$. For statistical inference on the parameters in \eqref{eq:heat}, the key quantities to study are \emph{power variations}
\[
V_{n}^p(x):=\frac{1}{n}\sum_{i=1}^{n}\bigg|\frac{\Delta_{i}X(x)}{\Delta_n^{1/4}}\bigg|^{p},\qquad\Delta_{i}X(x):=X_{i\Delta_n}(x)-X_{(i-1)\Delta_n}(x)\,,
\]
with $p\in\N$. The normalization of $\Delta_{i}X(x)$ with $\Delta_{n}^{1/4}$ takes into account the (almost) $\frac{1}{4}$-H\"older regularity in time of $X_{t}(x)$, see \cite[Ex.~2.3.5]{lototsky}. By homogeneity in space, statistics to consider for volatility estimation are spatial averages
\begin{equation}
\bar{V}_{n,m}^p:=\frac{1}{m}\sum_{k=1}^{m}V_{n}^p(x_{k})=\frac{1}{nm}\sum_{i=1}^{n}\sum_{k=1}^{m}\bigg|\frac{\Delta_{i}X(x_k)}{\Delta_n^{1/4}}\bigg|^{p}\,.\label{eq:Vnm}
\end{equation}
The main result of this note is a central limit theorem for $\bar{V}^p_{n,m}$ in the double asymptotic regime where $n\to\infty$ and (possibly) $m\to\infty$. An important role in our asymptotic analysis is played by the second-order increment operator $D_{2}(f,s):=f(s)-2f(s-1)+f(s-2)$ for some function $f$, being well defined on $[s-2,s]$. For brevity we assume $\xi=0$, but the result readily extends to sufficiently regular initial conditions which are independent of $\dot W$.
\begin{theorem}\label{thm:clt}
  Consider (\ref{eq:heat}) with $\xi=0$. For $\delta_{m}:=\min_{k=2,\dots,m}|x_{k}-x_{k-1}|$ assume that $\Delta_{n}/\delta_{m}^{2}\to0$ as $n\vee m\to\infty$. Then the power variations from (\ref{eq:Vnm}) with $p\in\N$ satisfy as $n\to\infty$ and $\Delta_n\to 0$
  \begin{align*}
  \sqrt{m\cdot n}\Big(\bar{V}_{n,m}^p-\Big(\frac{2}{\pi\theta}\Big)^{\frac{p}{4}}\sigma^p\mu_p\Big)\overset{d}{\longrightarrow}\mathcal{N}\bigg(0,\Big(\frac{2}{\pi\theta}\Big)^{\frac{p}{2}}\sigma^{2p}\Big((\mu_{2p}-\mu_p^2)+2\sum_{r=2}^{\infty}\rho_p\big(\tfrac12 D_2(\sqrt{\cdot},r)\big)\Big)\bigg)\,,
  \end{align*}
  with $\mu_p=\E[|Z|^p],Z\sim \mathcal{N}(0,1)$, and with $\rho_p(a)=\Cov(|Z_1|^p,|Z_2|^p)$ for $Z_1,Z_2$ jointly normally distributed with expectation 0, variances 1 and correlation $a$.
\end{theorem}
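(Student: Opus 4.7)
The plan is to follow the approach of Bibinger and Trabs \cite{BibingerTrabs2017}, reducing the statement to the central limit theorem of Peligrad and Utev \cite{PeligradUtev1997} for triangular arrays under a $\rho$-mixing type covariance inequality, but with the Dirichlet eigenfunction expansion used there replaced by the explicit heat kernel on $\R$. Since $\xi=0$, the mild solution \eqref{eq:mildSol} is a centred Gaussian field, so everything reduces to an analysis of covariances together with a transfer of the Gaussian estimates to the power-$p$ functional.

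The first step is to use the heat-semigroup identity $\int_\R G(a,z-x)G(b,z-y)\,\d z=G(a+b,x-y)$ and Fubini to obtain the closed form
\[
\Cov(X_t(x),X_s(y))=\sigma^2\int_0^{s\wedge t}G(t+s-2u,x-y)\,\d u,
\]
which for $y=x$ and $s\le t$ evaluates to $(\sigma^2/\sqrt{2\pi\theta})(\sqrt{t+s}-\sqrt{t-s})$. A direct second-difference computation then shows $\Var(\Delta_i X(x)/\Delta_n^{1/4})\to\sigma^2\sqrt{2/(\pi\theta)}$ and, at temporal lag $\ell\ge1$, asymptotic correlation $\tfrac12 D_2(\sqrt{\cdot},\ell+1)$ (reindexing $r=\ell+1$ matches the range $r\ge2$ in the theorem), with nonstationary initial corrections of order $i^{-3/2}$ that are summable in $i$. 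For distinct spatial points the same formula produces a Gaussian factor $\exp(-c|x_k-x_{k'}|^2/\Delta_n)$ in the cross-covariances, so the condition $\Delta_n/\delta_m^2\to0$ renders the spatial cross-terms asymptotically negligible.

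Writing $Y_{i,k}:=|\Delta_iX(x_k)/\Delta_n^{1/4}|^p$ and using the identity $\Cov(|Z_1|^p,|Z_2|^p)=\tau^{2p}\rho_p(\rho)$ for jointly $\mathcal N(0,\tau^2)$ variables with correlation $\rho$, the limiting variance can be read off: the diagonal produces the term $\mu_{2p}-\mu_p^2$; the same-$k$ lags $\ell\ge1$ give the convergent series $2\sum_{r\ge2}\rho_p(\tfrac12 D_2(\sqrt{\cdot},r))$ (absolute summability uses $|D_2(\sqrt{\cdot},r)|=O(r^{-3/2})$ together with the quadratic behaviour $\rho_p(a)=O(a^2)$ near zero coming from the Hermite expansion of $|z|^p$ via Mehler's formula); the cross-$k$ contributions vanish by the spatial decay above. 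The final and most delicate step is to apply the Peligrad--Utev CLT to $\{Y_{i,k}\}$ ordered lexicographically in $(k,i)$: moments of all orders are uniformly bounded by Gaussian hypercontractivity, and the main obstacle is to verify the $\rho$-mixing type covariance inequality between disjoint blocks. Via the Hermite expansion of $|z|^p-\E[|Z|^p]$ this reduces to controlling the maximal Gaussian correlation between the blocks, which by Step~1 decays as $O(r^{-3/2})$ in the temporal block separation and exponentially in the spatial one; establishing this bound uniformly in $n,m$, in direct analogy with \cite{BibingerTrabs2017}, is the principal technical effort. A routine control of the centring bias $|\E\bar V_{n,m}^p-(2/(\pi\theta))^{p/4}\sigma^p\mu_p|=O(1/n)$, arising from the summable initial corrections, completes the proof in the relevant asymptotic regime.
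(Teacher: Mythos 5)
Your high-level skeleton (heat-kernel covariance computation, Hermite expansion to identify the limiting variance, reduction to the Peligrad--Utev central limit theorem) matches the paper, and your identification of the asymptotic variance is correct. There are, however, two genuine gaps. First, the claim that distinct spatial points contribute ``a Gaussian factor $\exp(-c|x_k-x_{k'}|^2/\Delta_n)$'' to the cross-covariances is false at nonzero time lags: the increment covariance at lag $\ell$ is built from $g_\kappa(s)=\sqrt{s}\,e^{-\kappa^2/(2\theta s)}$ with $\kappa=|x-y|/\sqrt{\Delta_n}$ evaluated at arguments $s$ of order $\ell$, and once $\ell\gtrsim\kappa^2$ the exponential is of order one. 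The correct estimate (Proposition~\ref{prop:CovInk}) is only polynomial, $|\Cov(\Delta_iX(x),\Delta_jX(y))|=\mathcal{O}\big(\tfrac{\Delta_n}{|x-y|}\big(\tfrac{1}{|i-j-1|\vee1}+\1_{\{i=j\}}\big)\big)$, and the negligibility of the spatial cross-terms then requires Guyon's inequality $\Cov(|U_1|^p,|U_2|^p)\lesssim a^2$ together with the summation bound $\sum_{k\ne l}|x_k-x_l|^{-2}=\mathcal{O}(m\delta_m^{-2})$ and the hypothesis $\Delta_n/\delta_m^2\to0$; this step is not routine once the (incorrect) exponential bound is removed.

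Second, and more seriously, the covariance inequality (C) of Peligrad--Utev is the crux of the argument and you have not actually established it. Reducing it to ``the maximal Gaussian correlation between the blocks'' does not follow from pairwise correlation decay: the maximal correlation between two blocks of $\sigma$-algebras is not controlled by the maximum of pairwise correlations, and your lexicographic ordering in $(k,i)$ makes blocks separated by $u$ in the ordering possibly adjacent in time at different spatial sites, where only the polynomial spatial decay is available. The paper instead aggregates over space first, forming the time-indexed array $Z_{n,i}=\tfrac{1}{\sqrt{mn}}\sum_{k}\Delta_n^{-p/4}(|\Delta_iX(x_k)|^p-\E[|\Delta_iX(x_k)|^p])$, and verifies (C) via an explicit decomposition $Q_{b+u}^c=A_1+A_2$ in which $A_2$ is constructed from the Walsh integral of the noise on $(t_b,\cdot]$ and is therefore exactly independent of $Q_a^b$; the required bound $\Var(A_1)=\mathcal{O}(u^{-1/2})$ (Lemma~\ref{lem:aux}) is then proved by a first-order Taylor expansion inside the Hermite expansion combined with Taqqu's moment formula, and this occupies most of Section~\ref{sec:proofs}. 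Without this decomposition, or an equally concrete substitute, your proof is incomplete at its central step.
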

Note the explicit formula $\mu_p=2^{p/2}\Gamma\big(\tfrac{p+1}{2}\big)/\sqrt{\pi}$, also referred to as $(p-1)!!$ for $p$ even. In particular for $p=2$, that is, for the normalized discrete quadratic variation, we have $\mu_2=1$ and the asymptotic variance is 
\[\Big(\Big(\frac{2}{\pi\theta}\Big)^{1/4}\sigma\Big)^4\Big(2+\sum_{r=2}^{\infty} (D_2(\sqrt{\cdot},r))^2\Big)\]
in analogy with Example 2.11 in \cite{chong} and with \cite{BibingerTrabs2017}. This coincides with the variance of the normalized discrete quadratic variation of a fractional Brownian motion with Hurst exponent $1/4$ and scale parameter $(2/(\pi\theta))^{1/4}\sigma$, see also Theorem 6 in \cite{power} and \cite{tudor}.

The above result allows for a growing time horizon $T:=n\Delta_{n}=\KLEINO(n)$ and, more general than in \cite{BibingerTrabs2017}, the number $m$ of spatial observations in the unbounded spatial domain can be larger than the number of observation times $n$. The relevant condition that induces de-correlated observations in space is $\Delta_{n}/\delta_{m}^{2}\to0$, tantamount to a finer observation frequency in time than in space. Based on Theorem~\ref{thm:clt}, one can construct estimators and confidence statements for the parameters $\sigma^{2}$ and $\theta$, if the other one is known, see \cite{cialenco,BibingerTrabs2017,chong}. If no parameter is known apriori, \cite[Sec.\ 5]{BibingerTrabs2017} show that the ``viscosity-adjusted volatility'' $\sigma^2\sqrt{2/\vartheta}$ can be estimated consistently, also noted in \cite[Sec.\ 2.3]{chong}. 
\section{High-frequency asymptotic analysis of power variations}
Our analysis builds upon the following result, whose proof is postponed to Section~\ref{sec:proofs}.
\begin{proposition}
\label{prop:CovInk} For $x,y\in\R$ with $x\neq y$, we
have that
\begin{align*}
\Cov(\Delta_{i}X(x),\Delta_{j}X(x))=& \sqrt{\Delta_{n}} \sqrt{\frac{2}{\pi\theta}}\sigma^2\big(\1_{\{i=j\}}\hspace*{-.04cm}+\hspace*{-.04cm}\tfrac12 D_{2}(\sqrt{\cdot},|i-j|+1)\1_{\{i\ne j\}}\hspace*{-.04cm}+\hspace*{-.04cm}\tfrac12 D_{2}(\sqrt{\cdot},i+j)\big)\;\text{and}\\
|\hspace*{-.05cm}\Cov(\Delta_{i}X(x),\Delta_{j}X(y))|=& \;\mathcal{O}\bigg(\frac{\Delta_{n}}{|x-y|}\Big(\frac{1}{|i-j-1|\vee1}+\1_{\{i=j\}}\Big)\bigg)\,.
\end{align*}
\end{proposition}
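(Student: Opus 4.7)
The approach rests on the It\^o--Walsh isometry combined with the semigroup property of the heat kernel. With the kernel
\[
\phi_i(s,z;x) := G(i\Delta_n-s, x-z)\1_{\{s<i\Delta_n\}} - G((i-1)\Delta_n-s, x-z)\1_{\{s<(i-1)\Delta_n\}},
\]
one has $\Delta_i X(x) = \sigma\int_0^\infty\!\int_\R \phi_i(s,z;x)\,\dot W(ds,dz)$, so the isometry yields
\[
\Cov(\Delta_i X(x),\Delta_j X(y)) = \sigma^2\int_0^\infty\!\int_\R \phi_i(s,z;x)\phi_j(s,z;y)\,dz\,ds.
\]
The semigroup identity $\int_\R G(a,x-z)G(b,y-z)\,dz = G(a+b, x-y)$ removes the spatial integration, and the substitution $v = b\Delta_n - 2s$ turns each of the four surviving time integrals into a difference of the antiderivative $H(v) := \int_0^v G(u, x-y)\,du$. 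A careful case analysis along $i>j$, $i<j$, and $i=j$ collapses the sum into
\[
\Cov(\Delta_i X(x),\Delta_j X(y)) = \tfrac{\sigma^2}{2}\bigl[D_2(H(\cdot\Delta_n), i+j) + D_2(H(\cdot\Delta_n), |i-j|+1)\bigr]
\]
when $i\ne j$, and into $\tfrac{\sigma^2}{2}D_2(H(\cdot\Delta_n), 2i) + \sigma^2 H(\Delta_n)$ when $i=j$, the extra summand $\sigma^2 H(\Delta_n)$ arising from the shorter $s$-range in the diagonal case.

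For the first assertion ($x=y$) the antiderivative is explicit, $H(v)=\sqrt{2v/(\pi\theta)}$, so $D_2(H(\cdot\Delta_n), s) = \sqrt{2\Delta_n/(\pi\theta)}\,D_2(\sqrt{\cdot}, s)$ and $H(\Delta_n)=\sqrt{2\Delta_n/(\pi\theta)}$. Inserting these into the two identities above reproduces the closed-form expression of the proposition, with the indicator $\1_{\{i=j\}}$ encoding precisely the diagonal boundary correction.

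For the second assertion ($x\ne y$) the task reduces to uniformly bounding the two second differences and the diagonal term. From the explicit Gaussian form of $G$, by optimizing in $v$ and using the boundedness of $\eta^k e^{-\eta}$ for $\eta=(x-y)^2/(2\theta v)$ and suitable $k \ge 0$, one obtains
\[
G(v,x-y) \le \frac{C}{|x-y|}, \qquad |\partial_v G(v,x-y)| \le \frac{C}{|x-y|\,v}\qquad (v>0).
\]
Writing the second difference in integrated form,
\[
D_2(H(\cdot\Delta_n), s) = \int_{(s-2)\Delta_n}^{(s-1)\Delta_n}\!\!\bigl[G(u+\Delta_n, x-y) - G(u, x-y)\bigr]\,du,
\]
and applying the gradient bound yields $|D_2(H(\cdot\Delta_n), s)| \le C\Delta_n/(|x-y|(s-2))$ for $s\ge 3$. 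Inserting $s=i+j$ and $s=|i-j|+1$, together with the trivial estimate $H(\Delta_n) \le C\Delta_n/|x-y|$ in the diagonal case, produces the announced $\mathcal{O}$-bound; the $\vee 1$ in $|i-j-1|\vee 1$ and the indicator $\1_{\{i=j\}}$ absorb the finitely many boundary cases where $s-2$ fails to be positive.

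The main technical ingredient is the derivative bound $|\partial_v G(v,x-y)|\le C/(|x-y|\,v)$, which supplies the crucial $|x-y|^{-1}$-singularity on which the space-decorrelation required for the central limit theorem rests; beyond that, the proof is a patient but routine tracking of the four covariance summands and their small-index boundary cases.
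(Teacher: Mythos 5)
Your proof is correct and follows essentially the same route as the paper's: the It\^o--Walsh isometry plus the heat-kernel semigroup identity reduce the covariance to second-order differences of the antiderivative $H(v)=\int_0^v G(u,x-y)\,du$ (which is exactly the paper's $\tfrac{1}{\sqrt{2\pi\theta}}g_{\kappa}-\tfrac{\kappa}{\theta}h_{\kappa}$ after rescaling), giving $\tfrac{\sigma^2}{2}\big(D_2(H(\cdot\Delta_n),i+j)+D_2(H(\cdot\Delta_n),|i-j|+1)\big)$ off the diagonal plus the correction $\sigma^2H(\Delta_n)$ on it, from which both assertions follow. The only genuine variation is organizational but tidy: you skip the paper's decomposition into $\overline B_i^{i-1}+C_i$ and, for $x\neq y$, you bound $D_2(H(\cdot\Delta_n),s)$ by writing it as $\int_{(s-2)\Delta_n}^{(s-1)\Delta_n}[G(u+\Delta_n,\cdot)-G(u,\cdot)]\,du$ and using $|\partial_v G(v,x-y)|\lesssim(|x-y|\,v)^{-1}$, whereas the paper computes $H$ explicitly and bounds the second derivatives of its two pieces $g_{\kappa}$ and $h_{\kappa}$ --- both yield the same $\Delta_n|x-y|^{-1}(s-2)^{-1}$ estimate.
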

The increments thus have non-negligible covariances and $t\mapsto X_{t}(x)$ is not a (semi-)martingale. The terms $D_{2}(\sqrt{\cdot},i+j)$ will turn out to be asymptotically negligible in the variance of the power variations. Since second-order differences $D_{2}(\sqrt{\cdot},\cdot)$ of the square root decay as its second derivative, we observe that $\Cov(\Delta_{i}X(x),\Delta_{j}X(x))=\mathcal{O}(\sqrt{\Delta_{n}}(i-j)^{-3/2})$. This motivates an asymptotic theory exploiting $\rho$-mixing arguments. From the proposition and joint normality of the increments, we readily obtain the expectation and variance of the power variations $V_{n}^p(x)$ at one spatial point $x\in\R$.
\begin{corollary}
\label{cor:expVar}For any $x\in\R$, we have that
\begin{subeqnarray}
\E[V_{n}^p(x)]=\Big(\frac{2}{\pi\theta}\Big)^{\frac{p}{4}}\sigma^p\mu_p+\mathcal{O}(n^{-1})~\text{and}\hspace*{4.5cm}\\
\hspace*{-.5cm}\Var(V_{n}^p(x))=\frac{1}{n}\Big(\frac{2}{\pi\theta}\Big)^{\frac{p}{2}}\sigma^{2p}\Big((\mu_{2p}-\mu_p^2)\hspace*{-.05cm}+\hspace*{-.05cm}2\sum_{r=2}^{\infty}\rho_p\big(\tfrac12 D_2(\sqrt{\cdot},r)\big)\Big)\hspace*{-.05cm}+\hspace*{-.05cm}\KLEINO\Big(\frac{1}{n}\Big)
\end{subeqnarray}
with $\mu_p=\E[|Z|^p],Z\sim  \mathcal{N}(0,1)$, and with $\rho_p(a)=\Cov(|Z_1|^p,|Z_2|^p)$ for $Z_1,Z_2$ jointly centred Gaussian with variances 1 and correlation $a$.
\end{corollary}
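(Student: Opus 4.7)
My plan is to compute both moments directly by exploiting the joint Gaussianity of the increments, reducing everything to scalar functions of normalised correlations whose $(i,j)$-dependence is controlled through the decay of $D_2(\sqrt{\cdot},\cdot)$.

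For part (a), I would first use Proposition~\ref{prop:CovInk} with $i=j$ to obtain $v_i := \Var(\Delta_i X(x)/\Delta_n^{1/4}) = \sqrt{2/(\pi\theta)}\,\sigma^2\bigl(1+\tfrac12 D_2(\sqrt{\cdot},2i)\bigr)$. Centred Gaussianity yields $\E[|\Delta_i X(x)/\Delta_n^{1/4}|^p]=v_i^{p/2}\mu_p$. A Taylor expansion in the argument $\tfrac12 D_2(\sqrt{\cdot},2i)=\mathcal{O}(i^{-3/2})$ gives $v_i^{p/2}=(2/(\pi\theta))^{p/4}\sigma^p+\mathcal{O}(i^{-3/2})$. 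Averaging over $i=1,\dots,n$ and using $\sum_{i\ge 1}i^{-3/2}<\infty$ then produces the claimed $\mathcal{O}(n^{-1})$ remainder.

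For part (b), I would expand the variance as a double sum and use joint normality to write $\Delta_i X(x)/\Delta_n^{1/4} = \sqrt{v_i}\,Z_i$ for standard Gaussians $Z_i$ with correlation $a_{ij}$. Then the diagonal terms equal $v_i^{p}(\mu_{2p}-\mu_p^2)$ and the off-diagonal ones factorise as $v_i^{p/2}v_j^{p/2}\rho_p(a_{ij})$. The diagonal sum is handled as in part (a) and contributes the $(\mu_{2p}-\mu_p^2)$ part to leading order. Proposition~\ref{prop:CovInk} further gives $a_{ij} = \tfrac12 D_2(\sqrt{\cdot},|i-j|+1) + \tilde\delta_{ij}$, where $\tilde\delta_{ij}$ absorbs the boundary contribution $\tfrac12 D_2(\sqrt{\cdot},i+j)$ and the correction from the normalisation $\sqrt{v_i v_j}$. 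A change of variables $r=|i-j|+1$ then turns the off-diagonal sum into the series $\tfrac{2}{n}(2/(\pi\theta))^{p/2}\sigma^{2p}\sum_{r\ge 2}\rho_p(\tfrac12 D_2(\sqrt{\cdot},r))$ up to lower-order corrections.

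The step I expect to be the main obstacle is verifying that all residuals are $\KLEINO(n^{-1})$ after the $1/n^2$ prefactor. Since $|x|^p$ is even, its Hermite expansion has no odd-order coefficients, hence $\rho_p(a)=\mathcal{O}(a^2)$ as $a\to 0$; combined with $D_2(\sqrt{\cdot},r)=\mathcal{O}(r^{-3/2})$ this yields $\rho_p(\tfrac12 D_2(\sqrt{\cdot},r))=\mathcal{O}(r^{-3})$, ensuring absolute convergence of the limiting series. Local smoothness of $\rho_p$ near $0$ gives the Lipschitz bound $|\rho_p(a_{ij})-\rho_p(\tfrac12 D_2(\sqrt{\cdot},|i-j|+1))|\le C|\tilde\delta_{ij}|$, while the boundary and normalisation contributions satisfy $\sum_{i,j\le n}(i+j)^{-3/2}=\mathcal{O}(n^{1/2})$, producing $\mathcal{O}(n^{-3/2})=\KLEINO(n^{-1})$. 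The truncation correction replacing $(n-r+1)/n$ by $1$ costs $\mathcal{O}(n^{-2}\sum_{r\ge 2}r\cdot r^{-3})=\mathcal{O}(n^{-2})$ and is likewise $\KLEINO(n^{-1})$, which completes the argument.
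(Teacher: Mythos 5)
Your proposal is correct and follows essentially the same route as the paper: Proposition~\ref{prop:CovInk} plus joint Gaussianity reduces both moments to $\mu_p$, $\mu_{2p}$ and $\rho_p$ evaluated at the normalised correlations, and the Hermite-rank-2 bound $\rho_p(a)=\mathcal{O}(a^2)$ combined with $D_2(\sqrt{\cdot},r)=\mathcal{O}(r^{-3/2})$ yields the convergent series and the $\KLEINO(n^{-1})$ remainder. Your bookkeeping of the residuals (the $D_2(\sqrt{\cdot},i+j)$ boundary term, the variance normalisation, and the truncation of the series) is in fact more explicit than the paper's, which absorbs these into a single $\KLEINO(1)$ after citing the Hermite expansion.
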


\begin{proof}
For $i=j$, Proposition~\ref{prop:CovInk} yields $\Var(\Delta_{i}X(x))=\sqrt{\Delta_{n}}\sigma^{2}\sqrt{2/\pi\theta}(1+\tfrac12 D_{2}(\sqrt{\cdot},2i))$. Since $|D_{2}(\sqrt{\cdot},2i)|\le\tfrac14 (2(i-1))^{-3/2}$, we obtain by a Taylor expansion that 
\begin{equation*}
\E[V_{n}^p(x)]  =\frac{1}{n}\sum_{i=1}^{n}\mu_p\big|\sigma^2\sqrt{2/\pi\theta} \big(1+\tfrac12 D_{2}(\sqrt{\cdot},2i)\big)\big|^{\frac{p}{2}}=\Big(\frac{2}{\pi\theta}\Big)^{\frac{p}{4}}\sigma^p\mu_p+\mathcal{O}\big(n^{-1}\big).
\end{equation*}
Using the joint normality of the increments $(\Delta_{i}X)_{1\le i\le n}$, and writing $\Delta_{i}X=(2\Delta_n/\pi\theta)^{1/4}\sigma \tilde Z_{x,i}$, with a tight sequence $(\tilde Z_{x,i})_{1\le i\le n}$, we deduce for any $x\in\R$ that
\begin{align}
\Var(V_{n}^p(x))  &=\frac{1}{n^{2}}\sum_{i,j=1}^{n}\Cov\bigg(\bigg|\frac{\Delta_{i}X(x)}{\Delta_n^{1/4}}\bigg|^{p},\bigg|\frac{\Delta_{j}X(x)}{\Delta_n^{1/4}}\bigg|^{p}\bigg)\label{eq:cov}\\
 &=\sigma^{2p}\Big(\frac{2}{\pi\theta}\Big)^{\frac{p}{2}}  \Big(\frac{1}{n^{2}}\sum_{i=1}^n\Var(| Z_1|^p)|1+\tfrac12 D_2(\sqrt{\cdot},2i)|^p+\frac{2}{n^2}\sum_{i=1}^{n}\sum_{j=1}^{i-1}\Cov(|\tilde Z_{x,i}|^p,|\tilde Z_{x,j}|^p)\Big)\,.\nonumber
\end{align}
By the above bound, the term with $D_{2}(\sqrt{\cdot},2i)$ is negligible such that $\Var(\tilde Z_{x,i})\approx 1$ up to this negligible term. For the covariance terms, we use Proposition~\ref{prop:CovInk} to obtain
\begin{align*}
\frac{1}{n}\sum_{i=1}^n\sum_{j=1}^{i-1}\Cov\big(|\tilde Z_{x,i}|^p,|\tilde Z_{x,j}|^p\big) 
&=\frac{1}{n}\sum_{i=1}^n\sum_{j=1}^{i-1}\rho_p\Big(\operatorname{corr}\big(\tilde Z_{x,i},\tilde Z_{x,j}\big)\Big)+\KLEINO(1)\\
& =\frac{1}{n}\sum_{i=1}^n\sum_{j=1}^{i-1}\rho_p\Big(\tfrac12 D_2(\sqrt{\cdot},i-j)\Big)+\KLEINO(1)\\
& =\sum_{r=2}^{\infty}\rho_p\Big(\tfrac12 D_2(\sqrt{\cdot},r)\Big)+\KLEINO(1)\,.
\end{align*}
The first equality comes from approximating the variances by one and the second approximation is based on the Hermite expansion of absolute power functions \eqref{hermite} with Hermite rank 2, see also \cite[(A.6)]{power}. The last estimate follows from
\[
  \rho_p\Big(\tfrac12 D_2(\sqrt{\cdot},i-j)\Big)=\mathcal O\Big(D_2(\sqrt{\cdot},i-j)^2\Big)=\mathcal O((i-j)^{-3}),\quad i>j.
\]
\end{proof}
As we can see from the previous proof, the term $(2\sigma^{4}/(\pi\theta))^{p/2}(\mu_{2p}-\mu_p^2)$ in the variance would also appear for independent increments, while the additional term involving $\rho_p$ comes from the non-vanishing covariances. Proposition~\ref{prop:CovInk} moreover implies that the covariance of $V_{n}^p(x)$ and $V_{n}^p(y)$ decreases with a growing distance of the spatial observation points $x$ and $y$. In particular, averaging over all spatial observations in (\ref{eq:Vnm}) reduces the variance by the factor $1/m$, as long as the high-frequency regime in time dominates the spatial resolution. The next corollary determines the asymptotic variance in Theorem~\ref{thm:clt}.
\begin{corollary}
\label{cor:varVnm} Under the conditions of Theorem \ref{thm:clt}, we have that
\[
\Var\big(\bar{V}_{n,m}^p\big)\hspace*{-.05cm}=\hspace*{-.05cm}\frac{1}{mn}\Big(\frac{2}{\pi\theta}\Big)^{\frac{p}{2}}\sigma^{2p}\Big((\mu_{2p}-\mu_p^2)+2\sum_{r=2}^{\infty}\rho_p\big(\tfrac12 D_2(\sqrt{\cdot},r)\big)\Big)\big(1+\KLEINO(1)\big).
\]
\end{corollary}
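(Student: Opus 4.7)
The plan is to decompose the variance into diagonal and off-diagonal contributions in the spatial index,
\begin{equation*}
\Var\big(\bar V_{n,m}^p\big) = \frac{1}{m^{2}}\sum_{k=1}^{m}\Var\big(V_{n}^p(x_k)\big) + \frac{1}{m^{2}}\sum_{k\neq l}\Cov\big(V_{n}^p(x_k),V_{n}^p(x_l)\big),
\end{equation*}
and to show that only the diagonal part contributes to leading order. The diagonal piece is immediate: by Corollary~\ref{cor:expVar} each term $\Var(V_n^p(x_k))$ equals the same expression, independent of $x_k$, so the diagonal sum yields exactly the stated leading term $\frac{1}{mn}(\tfrac{2}{\pi\theta})^{p/2}\sigma^{2p}\big((\mu_{2p}-\mu_p^2)+2\sum_{r\ge 2}\rho_p(\tfrac12 D_2(\sqrt{\cdot},r))\big)$ up to an additive $o(1/(mn))$.

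The real work is to show that the off-diagonal sum is of order $o(1/(mn))$. For $k\neq l$ I expand
\begin{equation*}
\Cov\big(V_n^p(x_k),V_n^p(x_l)\big)= \frac{1}{n^{2}}\sum_{i,j=1}^{n}\Cov\bigg(\Big|\tfrac{\Delta_i X(x_k)}{\Delta_n^{1/4}}\Big|^{p},\Big|\tfrac{\Delta_j X(x_l)}{\Delta_n^{1/4}}\Big|^{p}\bigg),
\end{equation*}
and exploit that $z\mapsto|z|^p-\mu_p$ has Hermite rank $2$, as already used in the proof of Corollary~\ref{cor:expVar}. The resulting Gaussian covariance bound gives $|\Cov(|Z_1|^p,|Z_2|^p)|\le C\,\operatorname{corr}(Z_1,Z_2)^{2}$ for jointly centred Gaussians of unit variance. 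Combining this with the off-diagonal estimate of Proposition~\ref{prop:CovInk} and the fact that $\Var(\Delta_i X(x))\asymp\sqrt{\Delta_n}$ (from its on-diagonal part) yields
\begin{equation*}
|\operatorname{corr}(\Delta_i X(x_k),\Delta_j X(x_l))| = \mathcal O\bigg(\frac{\sqrt{\Delta_n}}{|x_k-x_l|}\cdot\frac{1}{|i-j-1|\vee1}\bigg).
\end{equation*}
Squaring and summing over $i,j\in\{1,\ldots,n\}$ (the temporal double sum is $\mathcal O(n)$ since $\sum_{r}r^{-2}<\infty$) gives
\begin{equation*}
|\Cov(V_n^p(x_k),V_n^p(x_l))| = \mathcal O\Big(\frac{\Delta_n}{n|x_k-x_l|^{2}}\Big).
\end{equation*}

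Finally, since $\bar V_{n,m}^p$ is invariant under permutations of $x_1,\ldots,x_m$, we may relabel the spatial points in increasing order so that $|x_k-x_l|\ge \delta_m|k-l|$. Summing over $k\neq l$ and using $\sum_{r\ge 1}r^{-2}<\infty$ produces
\begin{equation*}
\frac{1}{m^{2}}\sum_{k\neq l}|\Cov(V_n^p(x_k),V_n^p(x_l))| = \mathcal O\Big(\frac{\Delta_n}{n\,m\,\delta_m^{2}}\Big),
\end{equation*}
which is $o(1/(mn))$ by the hypothesis $\Delta_n/\delta_m^{2}\to 0$. Combining the diagonal and off-diagonal pieces concludes the proof. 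The only delicate step is the off-diagonal bound, where the Hermite-rank-$2$ quadratic gain in the correlation and the spatial decay $|x_k-x_l|^{-2}$ must together beat the $\mathcal O(m^2)$ number of spatial pairs — this is precisely where the assumption $\Delta_n/\delta_m^2\to 0$ is used.
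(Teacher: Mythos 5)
Your proof is correct and follows essentially the same route as the paper: the same diagonal/off-diagonal decomposition, the Hermite-rank-two (Guyon-type) bound $\Cov(|Z_1|^p,|Z_2|^p)\lesssim \operatorname{corr}(Z_1,Z_2)^2$ combined with the spatial covariance estimate of Proposition~\ref{prop:CovInk}, and the ordering argument $|x_k-x_l|\ge\delta_m|k-l|$ which is exactly the paper's estimate \eqref{h_spatial}. No substantive differences.
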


\begin{proof}
For $U_1,U_2$ bivariate Gaussian with correlation $a$ and variances $\sigma_1^2$ and $\sigma_2^2$, we exploit the inequality
\[\Cov\big(|U_1|^p\,,\,|U_2|^p\big)\le C_p\sigma_1^p\sigma_2^pa^2\,,\]
with some constant $C_p$, which is based on the Hermite expansion \eqref{hermite} and given in Equation (4) of \cite{Guyon1987}, see also Lemma 3.3 of \cite{PAKKANEN}.\\
By this inequality and Proposition~\ref{prop:CovInk} for $x\neq y$, we deduce that
\begin{align*}\frac{1}{m^2}\sum_{k\ne l} \Cov\big(V_{n}^p(x_{k}),V_{n}^p(x_{l})\Big) &=\frac{1}{m^2n^2}\sum_{i,j}\sum_{k\ne l}\Cov\bigg(\bigg|\frac{\Delta_{i}X(x_k)}{\Delta_n^{1/4}}\bigg|^{p},\bigg|\frac{\Delta_{j}X(x_l)}{\Delta_n^{1/4}}\bigg|^{p}\bigg)\\
&=\mathcal{O}\bigg(\frac{1}{m^2n^2}\sum_{i,j}\sum_{k\ne l}\frac{\Delta_n}{|x_k-x_l|^2}\Big(\1_{\{i=j\}}+\frac{1}{|i-j-1|\vee1}\Big)^{2}\bigg).
\end{align*}
With the estimate
\begin{align*}
&\frac{1}{n^{2}}\sum_{i=1}^{n}\frac{\Delta_{n}}{|x_k-x_l|^2}+\frac{1}{n^{2}}\sum_{i=3}^{n}\sum_{j=1}^{i-2}\frac{\Delta_{n}}{|x_k-x_l|^2}\frac{1}{(i-j-1)^2} \\
&\quad \le \frac{\Delta_{n}}{n|x_k-x_l|^2}+\frac{\Delta_{n}}{n^{2}|x_k-x_l|^2}\sum_{i=3}^{n}\sum_{k=1}^{i-2}k^{-2}=\mathcal{O}\Big(\frac{\Delta_{n}}{n|x_k-x_l|^2}\Big)\,,
\end{align*}
we obtain in combination with Corollary~\ref{cor:expVar} that 
\begin{align*}
\Var(\bar{V}^p_{n,m}) & =\frac{1}{m^{2}}\Big(\sum_{k=1}^{m}\Var\big(V_{n}^p(x_{k})\big)+\sum_{k\neq l}\Cov\big(V_{n}^p(x_{k}),V_{n}^p(x_{l})\big)\Big)\\
 & =\frac{1}{mn}\Big(\frac{2}{\pi\theta}\Big)^{\frac{p}{2}}\sigma^{2p}\Big((\mu_{2p}-\mu_p^2)+2\sum_{r=2}^{\infty}\rho_p\big(\tfrac12 D_2(\sqrt{\cdot},r)\big)\Big)+\KLEINO\Big(\frac{1}{mn}\Big)
\end{align*}
under the condition $\Delta_n\delta_m^{-2}\to 0$, where we use that 
\begin{equation}\label{h_spatial}\sum_{k\ne l}\frac{1}{|x_k-x_l|^2}\le 2\delta_m^{-2}\sum_{k=2}^m\sum_{l=1}^{k-1}l^{-2}=\mathcal{O}\big(\log{(m)}\delta_m^{-2}\big)=\mathcal{O}\big(m\delta_m^{-2}\big)\,.\end{equation}
\end{proof}
We turn to the proof of the central limit theorem transferring the strategy from \cite{BibingerTrabs2017} to our model. Define
the triangular array
\[
Z_{n,i}:=\frac{1}{\sqrt{mn}}\sum_{k=1}^{m}\Delta_{n}^{-\frac{p}{4}}\big(|\Delta_{i}X(x_{k})|^{p}-\E[|\Delta_{i}X(x_{k})|^{p}]\big)\,.
\]
Peligrad and Utev \cite[Thm. B]{PeligradUtev1997} established the central limit theorem $\sum_{i=1}^{n}Z_{n,i}\overset{d}{\to}\mathcal{N}(0,v^{2})$, with variance $v^{2}:=\lim_{n\to\infty}\Var(\sum_{i=1}^{n}Z_{n,i})$, under the following conditions:
\begin{enumerate}
\item[(A)] The variances satisfy $\limsup_{n\to\infty}\sum_{i=1}^{n}\Var(Z_{n,i})<\infty$ and there is a constant $C>0$, such that
\[
\Var\Big(\sum_{i=a}^{b}Z_{n,i}\Big)\le C\sum_{i=a}^{b}\Var(Z_{n,i})\quad\text{for all }0\le a\le b\le n.
\]
\item[(B)] The Lindeberg condition is fulfilled: 
\begin{equation*}
\lim_{n\to\infty}\sum_{i=1}^{n}\E[Z_{n,i}^{2}\1_{\{|Z_{n,i}|>\eps\}}]=0\quad\text{for all }\eps>0.
\end{equation*}
\item[(C)] The following covariance inequality is satisfied. For all $t\in\R$, there is a function $\rho_{t}(u)\ge0,u\in\N,$ satisfying $\sum_{j\ge1}\rho_{t}(2^{j})<\infty$, such that for all integers $1\le a\le b<b+u\le c\le n$:
\[
\Cov(e^{it\sum_{i=a}^{b}Z_{n,i}},e^{it\sum_{i=b+u}^{c}Z_{n,i}})\le\rho_{t}(u)\sum_{i=a}^{c}\Var(Z_{n,i}).
\]
\end{enumerate}
Therefore, Theorem~\ref{thm:clt} follows if the conditions (A) to (C) are verified. (C) is a $\rho$-mixing type condition generalizing the more restrictive condition from \cite{utev1991} that the triangular array is $\rho$-mixing with a certain decay of the mixing coefficients.\\

\noindent\emph{Proof of Theorem~\ref{thm:clt}}
 (A) follows from Proposition~\ref{prop:CovInk}. More precisely, we can verify analogously to the proofs of the Corollaries~\ref{cor:expVar} and \ref{cor:varVnm} that 
\begin{equation*}
\Var(Z_{n,i})=\frac{1}{n}\Big(\Big(\frac{2}{\pi\theta}\Big)^{\frac{p}{2}}\sigma^{2p}\Big((\mu_{2p}-\mu_p^2)+2\sum_{r=2}^{\infty}\rho_p\big(\tfrac12 D_2(\sqrt{\cdot},r)\big)\Big)+\mathcal{O}\Big(\frac{\Delta_{n}}{\delta_{m}^2}\Big)\Big),
\end{equation*}
and we obtain that
\begin{align*}
&\Var\Big(\sum_{i=a}^{b}Z_{n,i}\Big)  =\frac{(b-a+1)}{ n}\Big(\frac{2}{\pi\theta}\Big)^{\frac{p}{2}}\sigma^{2p}\Big((\mu_{2p}-\mu_p^2)+2\sum_{r=2}^{\infty}\rho_p\big(\tfrac12 D_2(\sqrt{\cdot},r)\big)\Big)\\
&\hspace*{7cm}+\mathcal{O}\Big(\frac{(b-a+1)}{ n}\frac{\Delta_{n}}{\delta_{m}^2}+\frac{1}{n}\Big).
\end{align*}
(B) is implied by the Lyapunov condition, since the normal distribution of $\Delta_iX(x_k)$ yields with some constant $C$ that
\[
\sum_{i=1}^{n}\E[Z_{n,i}^{4}]\le C\sum_{i=1}^{n}\big(\E[Z_{n,i}^2]\big)^{2}=\mathcal{O}(n^{-1}) \to0.
\]
(C) Define $Q_{a}^{b}:=\sum_{i=a}^{b}Z_{n,i}$. For a decomposition $Q_{b+u}^{c}:=\sum_{i=b+u}^{c}Z_{n,i}=A_{1}+A_{2}$, where $A_{2}$ is independent of $Q_{a}^{b}$, an elementary estimate with the Cauchy-Schwarz inequality shows that
\begin{equation}
|\Cov(e^{itQ_{a}^{b}},e^{itQ_{b+u}^{c}})|\le2t^{2}\Var(Q_{a}^{b})^{1/2}\Var(A_{1})^{1/2}\,,\label{eq:mixing}
\end{equation}
see \cite[(52)]{BibingerTrabs2017}. To determine such a suitable decomposition, we write for $i>b$
\begin{align}
\Delta_{i}X(x)  &:=\overline B_{i}^b(x)+\underline{B}_{i}^b(x),\qquad\text{where}\nonumber \\
\overline B_{i}^b(x)  &:=\int_{0}^{t_{b}}\int_{\R}\Delta_{i}G(s,x-y)\sigma\dot{W}(ds,dy)\,,\label{eq:ds}\\
 \Delta_{i}G(s,x)&:=G(t_{i}-s,x)-G(t_{i-1}-s,x)\,,\nonumber\\
\underline{B}_{i}^b(x)  &:=\int_{t_{b}}^{t_{i-1}}\int_{\R}\Delta_{i}G(s,x-y)\sigma\dot{W}(ds,dy) \nonumber \\
&\quad +\int_{t_{i-1}}^{t_{i}}\int_{\R}G(t_{i}-s,x-y)\sigma\dot{W}(ds,dy)\,.\nonumber 
\end{align}
Then, we set $A_1:=Q_{b+u}^c-A_2$ and
\[
  A_{2}:=\frac{1}{\sqrt{mn}}\sum_{i=b+u}^{c}\sum_{k=1}^{m}\Delta_{n}^{-\frac{p}{4}}\big(\big|\underline{B}_{i}^b(x_k)\big|^p-\E[|\underline B_i^b(x_k)|^p]\big),\]
where $A_{2}$ is indeed independent from $Q_{a}^{b}$. 
\begin{lemma}
\label{lem:aux}Under the conditions of Theorem \ref{thm:clt}, $\Var(A_{1})=\mathcal{O}(u^{-1/2})$ holds.
\end{lemma}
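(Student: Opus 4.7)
The core idea is that $\overline B_i^b(x_k) = \Delta_i X(x_k) - \underline B_i^b(x_k)$ is the ``past-noise'' contribution to $\Delta_i X(x_k)$ (stochastic integral over $[0,t_b]$), hence independent of $\underline B_i^b(x_k)$ (integral over $[t_b,t_i]$), and its variance is small for $i\gg b$. The first step would be a Plancherel computation with $\hat G(t,\xi)=\exp(-\theta t\xi^2/2)$, analogous to Proposition~\ref{prop:CovInk}, giving
\[
\int_\R|\Delta_iG(s,x-y)|^2\,dy = \frac{1}{2\sqrt{\pi\theta}}\bigg(\frac1{\sqrt{t_i-s}}+\frac1{\sqrt{t_{i-1}-s}}-\frac{2\sqrt2}{\sqrt{t_i+t_{i-1}-2s}}\bigg)=\mathcal O\bigg(\frac{\Delta_n^2}{(t_{i-1}-s)^{5/2}}\bigg)
\]
for $t_{i-1}-s\gtrsim\Delta_n$, by a second-order Taylor expansion in $\Delta_n$. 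Integrating over $s\in[0,t_b]$ then yields $\Var(\overline B_i^b(x))\le C\Delta_n^{1/2}(i-b)^{-3/2}$, uniformly in $x$.

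Next, using the pointwise inequality $\big||y+\alpha|^p-|y|^p\big|\le C_p|\alpha|(|\alpha|^{p-1}+|y|^{p-1})$, the independence $\overline B_i^b(x_k)\perp\underline B_i^b(x_k)$, and the Gaussian moment bound $\E[|\underline B_i^b(x_k)|^q]\le C_q\Delta_n^{q/4}$, I would obtain, for $g_i(x_k):=|\Delta_iX(x_k)|^p-|\underline B_i^b(x_k)|^p$,
\[
\E[g_i(x_k)^2]\le C_p\big(\Var(\overline B_i^b)^p+\Var(\overline B_i^b)\,\Delta_n^{(p-1)/2}\big)\le C\Delta_n^{p/2}(i-b)^{-3/2}.
\]

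A direct Cauchy--Schwarz bound $|\Cov(\widetilde g_i,\widetilde g_j)|\le C\Delta_n^{p/2}(u_iu_j)^{-3/4}$ (with $u_i:=i-b$ and $\widetilde g_i:=g_i-\E[g_i]$) is too coarse because $\sum_{u_i\ge u}u_i^{-3/4}$ diverges. The plan is instead the Taylor linearisation $g_i(x_k)=p\,\overline B_i^b(x_k)\,S_i(x_k)+R_i(x_k)$ with $S_i(x):=\sgn(\underline B_i^b(x))|\underline B_i^b(x)|^{p-1}$ (for $p\ge 2$; $p=1$ can be handled analogously), exploiting that by independence of past and future white noise
\[
\Cov\big(\overline B_i^b(x_k)S_i(x_k),\overline B_j^b(x_l)S_j(x_l)\big)=\Cov\big(\overline B_i^b(x_k),\overline B_j^b(x_l)\big)\cdot\E\big[S_i(x_k)S_j(x_l)\big].
\]
A mixed second-order Taylor expansion as in the first paragraph, applied to $\int\Delta_iG(s,\bull)\Delta_jG(s,\bull)\,dy$, yields $|\Cov(\overline B_i^b(x),\overline B_j^b(x))|=\mathcal O(\Delta_n^{1/2}(u_i+u_j)^{-3/2})$, while a Hermite expansion of the odd function $\sgn(\cdot)|\cdot|^{p-1}$ combined with the correlation decay of $(\underline B_i^b,\underline B_j^b)$ inherited from Proposition~\ref{prop:CovInk} gives $|\E[S_i(x)S_j(x)]|\le C\Delta_n^{(p-1)/2}(|i-j|\vee1)^{-3/2}$. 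Summing $\sum_{i,j=b+u}^{c}(u_i+u_j)^{-3/2}(|i-j|\vee1)^{-3/2}=\mathcal O(u^{-1/2})$, controlling the cross-spatial ($k\neq l$) contributions via a spatial analogue of Proposition~\ref{prop:CovInk} together with $\Delta_n/\delta_m^2\to 0$ as in Corollary~\ref{cor:varVnm}, and noting that the remainder $R_i$ involves strictly higher powers of $\overline B_i^b$ so that its variance contribution is of smaller order $\mathcal O((nu)^{-1})$, should altogether give $\Var(A_1)=\mathcal O((n\sqrt u)^{-1})=\mathcal O(u^{-1/2})$. The hardest part will be the mixed spatial-temporal covariance bound for $\overline B_i^b(x_k)$, i.e.\ the Fourier/Plancherel analysis with the time cut-off at $t_b$, which is more delicate than the estimates behind Proposition~\ref{prop:CovInk}.
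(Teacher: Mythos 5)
Your overall strategy is the right one and runs parallel to the paper's: isolate the ``past-noise'' part $\overline B_i^b$, use that $\Var(\overline B_i^b(x))\lesssim\sqrt{\Delta_n}\,(i-b)^{-3/2}$ (your Plancherel/second-difference computation of $\int_\R|\Delta_iG(s,\cdot)|^2$ is correct and is exactly what the explicit $D_2(g_0,\cdot)$ formulas in the paper give), linearise the power function in $\overline B_i^b$, factor the covariance using independence of past and future noise, and feed in the Hermite-rank-one decay of $\E[S_iS_j]$ together with the correlation decay of the $\underline B$'s. You also correctly diagnose why plain Cauchy--Schwarz fails. The paper implements the linearisation differently: it first expands $|x|^p-\mu_p=\sum_{q\ge2}a_qH_q(x)$ and then Taylor-expands each $H_q$ with \emph{integral-form} remainder using $H_q'=qH_{q-1}$, which is an exact identity with no leftover term; all resulting covariances of products $H_1\cdot H_{r-1}$ are then evaluated by Taqqu's Isserlis-type formula, so the temporal and spatial decay both enter only through the pairwise Gaussian covariances $\E[\overline B_i^b(x_k)\overline B_j^b(x_l)]$ and $\E[\underline B_i^b(x_k)\underline B_j^b(x_l)]$.

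The genuine gap in your version is the remainder $R_i$. Your claim that its variance contribution is $\mathcal O((nu)^{-1})$ because it ``involves strictly higher powers of $\overline B_i^b$'' only follows from the triangle inequality $\Var(\sum)\le(\sum\|R_i(x_k)-\E R_i(x_k)\|_2)^2$ when the number of spatial points is bounded: with $\|R_i\|_2\lesssim\Delta_n^{p/4}(i-b)^{-3/2}$ this route gives $\frac{1}{mn}\bigl(m\sum_{i\ge b+u}(i-b)^{-3/2}\bigr)^2=\mathcal O\bigl(\tfrac{m}{n}u^{-1}\bigr)$, which diverges in the regime $m\gg n$ that Theorem~\ref{thm:clt} explicitly allows. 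To control $\sum_{k\neq l}\Cov(R_i(x_k),R_j(x_l))$ you need quantitative decay of the covariance of these \emph{nonlinear} Gaussian functionals in $|x_k-x_l|$, and that does not come for free from the size of $R_i$; it requires a Wick/Hermite expansion of the remainder itself --- at which point you are essentially reconstructing the paper's Taqqu-formula computation, where the third family of terms (those proportional to $\Var(\overline B_i^b)\Var(\overline B_j^b)\E[\underline B_i^b\underline B_j^b]^{r-2}$) is precisely the quadratic-in-$\overline B$ contribution you relegated to the remainder, and it is \emph{not} of smaller order: it contributes at the critical rate $u^{-1/2}$. A secondary issue is $p=1$ (and more generally the non-smoothness of $|y|^{p-1}$, $|y|^{p-2}$ at $0$ for small $p$), where the pointwise Taylor remainder bound degrades and ``analogously'' hides real work; the Hermite-expansion route avoids differentiating $|\cdot|^p$ altogether.
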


\noindent This auxiliary lemma is proved in Section~\ref{sec:proofs}.
In combination with $\Var(Q_{b+u}^{c})\ge \varpi\,\frac{(c-b-u+1)}{n}$, with some constant $\varpi>0$, and (\ref{eq:mixing}), we obtain condition (C):
\[
|\Cov(e^{itQ_{a}^{b}},e^{itQ_{b+u}^{c}})|=\mathcal{O}\Big(t^{2}u^{-\frac{1}{4}}\Var(Q_{a}^{c})\Big).
\]
This completes the proof of the central limit theorem for $\sum_{i=1}^{n}Z_{n,i}$ and Theorem \ref{thm:clt}.\hfill$\square$
\clearpage
\section{Remaining proofs\label{sec:proofs}}
In this section, we write $A\lesssim B$ for $A=\mathcal{O}(B)$.
\subsection{Proof of Proposition~\ref{prop:CovInk}}
Since $\Delta_i X(x)=\overline B_i^{i-1}(x)+C_i(x)$, with $\overline B_i^{i-1}(x)$ from \eqref{eq:ds} and
\begin{align}C_i(x)=\int_{t_{i-1}}^{t_{i}}\int_{\R}G(t_{i}-s,x-y)\sigma\dot{W}(ds,dy)\,,\end{align}
with $\overline B_{j}^{j-1}(x)$ and $C_{i}(x)$ centred and independent for $j\le i$, we derive for $j\le i$ that 
\begin{align}
&\Cov(\Delta_{i}X(x),\Delta_{j}X(y)) =\E[\Delta_{i}X(x)\Delta_{j}X(y)]\label{eq:covDecomp} \\
 & =\E[\overline B_{i}^{i-1}(x)\overline B_{j}^{j-1}(y)]+\E[\overline B_{i}^{i-1}(x)C_{j}(y)]\1_{\{i\ne j\}}+\E[C_{i}(x)C_{j}(y)]\1_{\{i=j\}}.\nonumber
\end{align}
Noting that $G(t,\cdot)$ is the density of $\mathcal{N}(0,\theta t)$, we obtain for $x_{1},x_{2}\in\R,r_{1},r_{2}\in(s,\infty)$ based on the identity for the convolution that
\begin{align}
&\int_{\R}G(r_{1}-s,x_{1}-y)G(r_{2}-s,x_{2}-y)dy \label{eq:covInt}\\
&\quad =\int_{\R}G(r_{1}-s,u)G(r_{2}-s,(x_{2}-x_{1})-u)du=G(r_{1}+r_{2}-2s,x_{2}-x_{1}).\nonumber  
\end{align}
We moreover obtain for $r_{3}\le(r_{1}+r_{2})/2$ and $y\ge0$:
\begin{align*}
&\int_{0}^{r_{3}}G(r_{1}+r_{2}-2s,y)ds  =\int_{r_1+r_2-2r_3}^{r_{1}+r_2}\frac12 \frac{1}{\sqrt{2\pi\theta u}}e^{-y^2/(2\theta u)}\,du\\
&=\frac{1}{\sqrt{2\pi\theta}}\big(\sqrt{r_{1}+r_{2}}\,e^{-y^{2}/(2\theta(r_{1}+r_{2}))}-\sqrt{r_{1}+r_{2}-2r_{3}}\,e^{-y^{2}/(2\theta(r_{1}+r_{2}-2r_{3}))}\big)\\
 & \quad-\frac{y}{\theta}\,\P\Big(\frac{y}{\sqrt{r_{1}+r_{2}}}\le\sqrt{\theta}Z\le\frac{y}{\sqrt{r_{1}+r_{2}-2r_{3}}}\Big),\qquad Z\sim\mathcal{N}(0,1).
\end{align*}
Based on that, we determine the terms in \eqref{eq:covDecomp}. Setting
\begin{equation}
\kappa:=|x-y|/\sqrt{\Delta_{n}},\, g_{\kappa}(s):=\sqrt{s}e^{-\kappa^{2}/(2\theta s)}~\text{and}~ h_{\kappa}(s):=\P(Z\ge\kappa/\sqrt{\theta s}),\label{eq:gh}
\end{equation}
we obtain for $j\le i$ by the generalization of It\^{o}'s isometry for Walsh integrals
\begin{align}
&\E[\overline B_{i}^{i-1}(x)\overline B_{j}^{j-1}(y)]\label{eq:bb} =\sigma^{2}\E\Big[\Big(\int_{0}^{t_{i-1}}\int_{\R}\Delta_{i}G(s,x-z)\dot{W}(ds,dz)\Big)
\\ &\hspace*{4cm}\times \Big(\int_{0}^{t_{j-1}}\int_{\R}\Delta_{j}G(s,y-z)\dot{W}(ds,dz)\Big)\Big]\nonumber\\
 & \quad =\sigma^{2}\int_{0}^{t_{j-1}}\int_{\R}\Delta_{i}G(s,x-z)\Delta_{j}G(s,y-z)dzds\nonumber\\
 & \quad=\sigma^{2}\sqrt{\Delta_{n}}\sqrt{\frac{2}{\pi\theta}}\Big(\tfrac12 g_{\kappa}(i+j)-\tfrac12 g_{\kappa}(i-j+2)-g_{\kappa}(i+j-1)\nonumber\\
 & \qquad\qquad+g_{\kappa}(i-j+1)+\tfrac12 g_{\kappa}(i+j-2)-\tfrac12 g_{\kappa}(i-j)\Big)\nonumber\\
 & \qquad-\sigma^{2}\sqrt{\Delta_{n}}\frac{\kappa}{\theta}\Big(h_{\kappa}(i+j)-h_{\kappa}(i-j+2)-2h_{\kappa}(i+j-1)\nonumber\\
 & \qquad\qquad+2h_{\kappa}(i-j+1)+h_{\kappa}(i+j-2)-h_{\kappa}(i-j)\Big)\nonumber\\
 & \quad =\sqrt{\Delta_{n}}\frac{\sigma^{2}}{2}\sqrt{\frac{2}{\pi\theta}}\big( D_{2}(g_{\kappa},i+j)- D_{2}(g_{\kappa},i-j+2)\big)\nonumber\\
 &\qquad\qquad-\sigma^{2}\sqrt{\Delta_{n}}\frac{\kappa}{\theta}\big(D_{2}(h_{\kappa},i+j)-D_{2}(h_{\kappa},i-j+2)\big).\nonumber
\end{align}
Similarly, we have for $j<i$ that
\begin{align}
&\E[\overline B_{i}^{i-1}(x)C_{j}(y)]  =\sigma^{2}\int_{t_{j-1}}^{t_{j}}\int_{\R}\Delta_{i}G(s,x-z)G(t_{j}-s,y-z)dzds\label{eq:bc}\\
 &~ =\sigma^{2}\int_{t_{j-1}}^{t_{j}}\Big(G(t_{i}+t_{j}-2s,x-y)-G(t_{i-1}+t_{j}-2s,x-y)\Big)ds\nonumber \\
 &~ =\frac{\sigma^{2}}{2}\sqrt{\Delta_{n}}\sqrt{\frac{2}{\pi\theta}}\Big( g_{\kappa}(i-j+2)- g_{\kappa}(i-j)-g_{\kappa}(i-j+1)+g_{\kappa}(i-j-1)\Big)\nonumber \\
  &\qquad-\sigma^{2}\sqrt{\Delta_{n}}\frac{\kappa}{\theta}\Big(h_{\kappa}(i-j+2)-h_{\kappa}(i-j)-h_{\kappa}(i-j+1)+h_{\kappa}(i-j-1)\Big).\nonumber 
\end{align}
For $i=j$, with $g_{\kappa}(0)=h_{\kappa}(0)=0$, we obtain that 
\begin{align}
\E[C_{i}(x)C_{i}(y)] & =\sigma^{2}\int_{t_{i-1}}^{t_{i}}\int_{\R}G(t_{i}-s,x-z)G(t_{i}-s,y-z)dzds\label{eq:cc}\\
 & =\frac{\sigma^{2}}{2}\sqrt{\Delta_{n}}\sqrt{\frac{2}{\pi\theta}}g_{\kappa}(2)-\sigma^{2}\sqrt{\Delta_{n}}\frac{\kappa}{\theta} h_{\kappa}(2).\nonumber 
\end{align}
Inserting \eqref{eq:bb}, \eqref{eq:bc} and \eqref{eq:cc} in \eqref{eq:covDecomp} yields
\begin{align*}
 & \Cov(\Delta_{i}X(x),\Delta_{j}X(y))\\
&~=\sigma^{2}\sqrt{\Delta_{n}}\sqrt{\frac{2}{\pi\theta}}\big(g_{\kappa}(1)\1_{\{i=j\}}+\tfrac12 D_{2}(g_{\kappa},|i-j|+1)\1_{\{i\ne j\}}+\tfrac12 D_{2}(g_{\kappa},i+j)\big)\\
 &  \qquad -\sigma^{2}\sqrt{\Delta_{n}}\frac{\kappa}{\theta}\big(2h_{\kappa}(1)\1_{\{i=j\}}+D_{2}(h_{\kappa},|i-j|+1)\1_{\{i\ne j\}}+D_{2}(h_{\kappa},i+j)\big).
\end{align*}
For $x=y$ we have $\kappa=0$ and obtain the result in Proposition \ref{prop:CovInk}. Since the second derivative of $g_{\kappa}$ is bounded by $|g_{\kappa}''(s)|=\frac{1}{4}|s^{-3/2}+2\kappa^{2}\theta^{-1}s^{-5/2}-\kappa^{4}\theta^{-2}s^{-7/2}|e^{-\kappa^{2}/(2\theta s)}\lesssim(\kappa s)^{-1}$ for all $s>0$, we deduce $D_{2}(g_{\kappa},s)\lesssim\kappa^{-1}(s-2)^{-1}$
for $s>2$. Similarly, $|h_{\kappa}''(s)|\lesssim(\kappa s^{-5/2}+\kappa^{3}s^{-7/2})e^{-\kappa^{2}/(2\theta s)}\lesssim\kappa^{-2}s^{-1}$
implies $\kappa D_{2}(h_{\kappa},s)\lesssim\kappa^{-1}(s-2)^{-1}$ for $s>2$. With $g_{\kappa}(s)+h_{\kappa}(s)\lesssim\kappa^{-1}$ for $s\in[0,2]$, we conclude that for $x\neq y$: 
\[
|\Cov(\Delta_{i}X(x),\Delta_{j}X(y))|\lesssim\frac{\Delta_{n}}{|x-y|}\Big(\frac{1}{|i-j-1|\vee1}+\1_{\{i=j\}}\Big).\tag*{{\qed}}
\]
\subsection{Proof of Lemma~\ref{lem:aux}} 
We use that the absolute power functions have an Hermite expansion with Hermite rank 2, that is,
\begin{equation}\label{hermite}|x|^p-\mu_p=\sum_{q\ge 2}a_q\,H_q(x)\,,\end{equation}
with $\mu_p$ from Corollary \ref{cor:expVar} and $H_q$ the $q$th Hermite polynomial and $a_2>0$, see Equation (5.2) of \cite{power}. 
The variance $\Var(A_1)$ coincides with the one of 
\begin{equation*}\tilde A_1=\frac{1}{\sqrt{mn}}\sum_{i=b+u}^c\sum_{k=1}^m\Delta_n^{-\frac{p}{4}}\big(\big|\overline B_{i}^b(x_k)+\underline B_{i}^b(x_k)\big|^p-\big|\underline B_{i}^b(x_k)\big|^p\big)\,,\end{equation*}
where the only difference to $A_1$ is that the expectation is not subtracted. $\overline B_{i}^b(x)$ and $\underline B_{i}^b(x)$ in \eqref{eq:ds} are independent, centred and jointly normally distributed. A first-order Taylor expansion with integral form of the remainder and the relation $H^{\prime}_q(x)=qH_{q-1}(x)$ yields that
\begin{align*}&\Delta_n^{\frac{p}{2}}\Var(\tilde A_1)=\frac{1}{mn}\sum_{i,j=b+u}^c\sum_{k,l=1}^m\Cov\Big(|\underline B_{i}^b(x_k)+\overline B_{i}^b(x_k)|^p-|\underline B_{i}^b(x_k)|^p\,,\\
&\hspace*{8cm}|\underline B_{j}^b(x_l)+\overline B_{j}^b(x_l)|^p-|\underline B_{j}^b(x_l)|^p\Big)\\
& =\frac{1}{mn}\sum_{i,j=b+u}^c\sum_{k,l=1}^m\Cov\bigg(\sum_{r\ge 2}a_r\Big(H_{r}\big(\underline B_{i}^b(x_k)+\overline B_{i}^b(x_k)\big)-H_{r}\big(\underline B_{i}^b(x_k)\big)\Big)\,,\\
&\hspace*{5.5cm}\sum_{s\ge 2}a_s\Big(H_{s}\big(\underline B_{i}^b(x_k)+\overline B_{i}^b(x_k)\big)-H_{s}\big(\underline B_{i}^b(x_k)\big)\Big)\bigg)\\
&=\frac{1}{mn}\sum_{i,j=b+u}^c\sum_{k,l=1}^m\Cov\Big(\sum_{r\ge 2}ra_r\int_0^1\overline B_{i}^b(x_k)H_{r-1}\big(\underline B_{i}^b(x_k)+\tau\overline B_{i}^b(x_k)\big)d\tau\,,\\
&\hspace*{5cm}\sum_{s\ge 2}sa_s\int_0^1\overline B_{j}^b(x_l)H_{s-1}\big(\underline B_{j}^b(x_l)+\tilde\tau\overline B_{j}^b(x_l)\big)d\tilde\tau\Big)\\
&=\frac{1}{mn}\sum_{i,j=b+u}^c\sum_{k,l=1}^m\int_0^1\int_0^1\sum_{r,s\ge 2}rsa_ra_s\overline v_i^k\overline v_j^l\,\Cov\Big(H_1(\overline B_{i}^b(x_k)/\overline v_i^k)H_{r-1}\big(\underline B_{i}^b(x_k)+\tau\overline B_{i}^b(x_k)\big)\,,\\
&\hspace*{7.25cm}H_1(\overline B_{j}^b(x_l)/\overline v_j^l)H_{s-1}\big(\underline B_{j}^b(x_l)+\tilde\tau\overline B_{j}^b(x_l)\big)\Big)\,d\tau d\tilde\tau.
\end{align*}
The variances of $(\underline B_{j}^b(x_l)+\tau\overline B_{j}^b(x_l))$ for all $\tau$ and $\Var(\overline B_{j}^b(x_l))=(\overline v_j^l)^2$ are for all $j,l$ constants multiplied with $\sqrt{\Delta_n}$ and $\sqrt{\Delta_n}(j-b)^{-3/2}$, respectively. Hence, we obtain a factor $\Delta_n^{p/2}$ which cancels out with the factor $\Delta_n^{-p/2}$ and we can transform $(\underline B_{j}^b(x_l)+\tau\overline B_{j}^b(x_l))_{j,l}$ to Gaussian random variables with unit variances where the constant factors by the transformations are not important for our upper bound on the decay in $u$. We can then simplify the sum of covariances using the Isserlis-type moment formula from Lemma 3.2 of Taqqu \cite{taqqu}. For $(X_1,X_2,Y_1,Y_2)^{\top}$ centred multivariate Gaussian such that $(X_i+Y_i)$ have unit variances and where $X_i$ have variances $\sigma_i^2$ and are independent of $Y_j$ for $i,j=1,2$, Taqqu's formula yields for $r,s\ge 2$ with some constants $C_{r,s}^{(a)},1\le a\le 4$, that
\begin{align*}
 & \mathbb{E}[H_{1}(\sigma_1^{-1}X_{1})H_{r-1}(X_{1}+Y_{1})H_{1}(\sigma_2^{-1}X_{2})H_{s-1}(X_{2}+Y_{2})]\\
&\quad =  (\sigma_1\sigma_2)^{-1}\Big(C_{r,s}^{(1)}\1_{\{r=s\}}\mathbb{E}[X_{1}X_{2}]\big(\E[X_{1}X_{2}]+\E[Y_{1}Y_{2}]\big)^{r-1}\\
 & \quad+C_{r,s}^{(2)}\1_{\{r=s\}}\big(\mathbb{E}[X_{1}^{2}]\mathbb{E}[X_{2}^{2}]+\mathbb{E}[X_{1}X_{2}]^{2}\big)\big(\E[X_{1}X_{2}]+\E[Y_{1}Y_{2}]\big)^{r-2}\\
 & \quad+C_{r,s}^{(3)}\1_{\{r=s+2\}}\mathbb{E}[X_{1}X_{2}]\E[X_{1}^{2}]\big(\E[X_{1}X_{2}]+\E[Y_{1}Y_{2}]\big)^{s-1}\\
 & \quad +C_{r,s}^{(4)}\1_{\{s=r+2\}}\E[X_{1}X_{2}]\mathbb{E}[X_{2}^{2}]\big(\E[X_{1}X_{2}]+\E[Y_{1}Y_{2}]\big)^{r-1}\Big)\,.
\end{align*}
By Taqqu's formula most cross terms with $r\ne s$ vanish in the identity above. Only terms with $|r-s|=2$ yield some non-vanishing summands. Except for some summands in the second line of the last equality, all other summands include either a factor $\E[X_1X_2]^2$ or $\E[X_1X_2]\E[Y_1Y_2]$. From the remaining summands with $\E[X_{1}^{2}]\E[X_{2}^{2}]\E[Y_1Y_2]^{r-2}$, using that $\E[H_u(X_1)H_v(X_1+Y_1)]=\1_{\{u=v\}}u!\E[X_1^2]$, the summand for $r=2$ cancels out in the covariance. Hence, it suffices to consider these three types of summands, the last only for $r\ge 4$ as coefficients for odd $r$ vanish. In order to derive an upper bound for $\Var(\tilde A_{1})$, we thus determine $\E[\overline B_{i}^b(x)\overline B_{j}^b(y)]$ and $\E[\underline B_{i}^b(x)\underline B_{j}^b(y)]$. To evaluate these terms, we conduct similar calculations as in the proof of Proposition~\ref{prop:CovInk}. For $b\le j\le i$, any $x,y\in\R$ and with the notation from \eqref{eq:gh}:
\begin{align*}
\E\big[\overline B_{i}^b(x)\overline B_{j}^b(y)\big] & =\sigma^{2}\int_{0}^{t_{b}}\int_{\R}\Delta_{i}G(s,x-z)\Delta_{j}G(s,y-z)dzds\\
 & =\frac{\sigma^{2}}{2}\sqrt{\Delta_{n}}\sqrt{\frac{2}{\pi\theta}}\big(D_{2}(g_{\kappa},i+j)-D_{2}(g_{\kappa},i+j-2b)\big)\\
 & \quad-\sigma^{2}\sqrt{\Delta_{n}}\frac{\kappa}{\theta}\big(D_{2}(h_{\kappa},i+j)-D_{2}(h_{\kappa},i+j-2b)\big).
\end{align*}
Since $|D_{2}(g_{0},s)|\lesssim s^{-3/2}$ and $|D_{2}(g_{\kappa},s)|+|\kappa D_{2}(h_{\kappa},s)|\lesssim\kappa^{-1}(s-2)^{-1}$ for $s>2$ and $\kappa>0$ as shown at the end of the proof of Proposition~\ref{prop:CovInk}, we conclude that
\begin{equation}
\big|\E\big[\overline B_{i}^b(x)\overline B_{j}^b(y)\big]\big|\lesssim\sqrt{\Delta_{n}}(i+j-2b)^{-\frac32}\1_{\{x=y\}}+\frac{\Delta_{n}}{|x-y|}(i+j-2b-2)^{-1}\1_{\{x\neq y\}}.\label{eq:d1d1}
\end{equation}
To bound $\E[\underline B_{i}^b(x)\underline B_{j}^b(y)]$, we use for $j\le i$ that 
\begin{align*}
\E[\underline B_{i}^b(x)\underline B_{j}^b(y)]= & \sigma^{2}\int_{t_{b}}^{t_{j-1}}\int_{\R}\Delta_{i}G(s,x-z)\Delta_{j}G(s,y-z)dzds\\
 & \quad +\1_{\{i\ne j\}}\E[\overline B_{i}^{i-1}(x)C_{j}(y)]+\1_{\{i=j\}}\E[C_{i}(x)C_{j}(y)]\,.
\end{align*}
The second and third summand have already been determined in the proof of Proposition~\ref{prop:CovInk}. For the first one, we obtain that
\begin{align*}
&\sigma^{2}\hspace*{-.05cm}\int_{t_{b}}^{t_{j-1}}\hspace*{-.15cm}\int_{\R}\Delta_{i}G(s,x-z)\Delta_{j}G(s,y-z)dzds\\
&\quad =\frac{\sigma^{2}\Delta_{n}^{1/2}}{2}\sqrt{\frac{2}{\pi\theta}}\big(D_{2}(g_{\kappa},i+j-2b)\hspace*{-.05cm}-\hspace*{-.05cm}D_{2}(g_{\kappa},i-j+2)\big)\\
&\qquad -\sigma^{2}\sqrt{\Delta_{n}}\frac{\kappa}{\theta}\big(D_{2}(h_{\kappa},i+j-2b)-D_{2}(h_{\kappa},i-j+2)\big)\,.
\end{align*}
Inserting the three summands, we derive that
\begin{align*}
\E[\underline B_{i}^b(x)\underline B_{j}^b(y)]  &=\frac{\sigma^{2}\Delta_{n}^{1/2}}{2}\sqrt{\frac{2}{\pi\theta}}\big(D_{2}(g_{\kappa},i+j-2b)+D_{2}(g_{\kappa},|i-j|+1)\1_{\{i\ne j\}}+2g_{\kappa}(1)\1_{\{i=j\}}\big)\\
 & \hspace*{1.1cm}-\sigma^{2}\sqrt{\Delta_{n}}\frac{\kappa}{\theta}\big(D_{2}(h_{\kappa},i+j-2b)+D_{2}(h_{\kappa},|i-j|+1)\1_{\{i\ne j\}}+2h_{\kappa}(1)\1_{\{i=j\}}\big)\\
 & \lesssim\Delta_{n}^{1/2}\Big(\frac{1}{|i-j-1|\vee1}+\1_{\{i=j\}}\Big)\Big(\1_{\{x=y\}}+\frac{\Delta_{n}^{1/2}}{|x-y|}\1_{\{x\neq y\}}\Big).
\end{align*}
Consider first the sum of covariances including the factors $\Delta_n^{-1}\E[\overline B_{i}^b(x_k)\overline B_{j}^b(x_l)]^2$. We obtain with \eqref{eq:d1d1} the bound
\begin{align*}
&\frac{C_p}{n}\sum_{i,j=b+u}^{c}(i+j-2b)^{-3}+\frac{C_p}{mn}\sum_{i,j=b+u}^{c}\sum_{k\ne l}\frac{\Delta_n}{|x_{k}-x_{l}|^2}(i+j-2b-2)^{-2}\\
&\lesssim \frac{c-b-u+1}{n}\sum_{k\ge u}k^{-3}+\frac{\Delta_{n}}{\delta_{m}^2}\frac{c-b-u+1}{n}\sum_{l\ge 1}l^{-2}\sum_{k\ge u}k^{-2}\lesssim u^{-1}\,,
\end{align*}
where the last step is similar to \eqref{h_spatial}.\\ For the terms with the factors $\Delta_n^{-1}\E[\underline B_{i}^b(x_k)\underline B_{j}^b(x_l)]\E[\overline B_{i}^b(x_k)\overline B_{j}^b(x_l)]$, we obtain that
\begin{align*}
&\frac{C_p}{n}\sum_{i,j=b+u}^{c}(i+j-2b)^{-\frac32}\Big(\frac{1}{|i-j-1|\vee1}+\1_{\{i=j\}}\Big)\\
 & \;+\frac{C_p}{mn}\sum_{i,j=b+u}^{c}\sum_{k\neq l}\frac{\Delta_{n}}{|x_{k}-x_{l}|^{2}}(i+j-2b-2)^{-1}\Big(\frac{1}{|i-j-1|\vee1}+\1_{\{i=j\}}\Big)\\
& \lesssim\frac{(c-b-u+1)}{n}\sum_{k\ge u} k^{-\frac32}+\frac{(c-b-u+1)}{n}\frac{\Delta_n}{\delta_m^2}\frac{1}{\sqrt{u}}\sum_{k\ge 1} k^{-\frac32}\\
&\lesssim u^{-\frac12} \frac{(c-b-u+1)}{n}\Big(1+\frac{\Delta_{n}}{\delta_{m}^{2}}\Big)\lesssim u^{-\frac12}\,.
\end{align*}
The terms with factors $\Delta_n^{-2}\Var(\overline B_{i}^b(x_k))\Var(\overline B_{j}^b(x_l))\E[\underline B_{i}^b(x_k)\underline B_{j}^b(x_l)]^2$ are bounded by
\begin{align*}
&\frac{C_p}{n}\sum_{i,j=b+u}^{c}\Big(\frac{1}{|i-j-1|\vee1}+\1_{\{i=j\}}\Big)^2(i-b)^{-\frac32}(j-b)^{-\frac32}\\
 & \;+\frac{C_p}{mn}\sum_{i,j=b+u}^{c}\sum_{k\neq l}\frac{\Delta_{n}}{|x_{k}-x_{l}|^2}\Big(\frac{1}{|i-j-1|\vee1}+\1_{\{i=j\}}\Big)^2(i-b)^{-\frac32}(j-b)^{-\frac32}\\
& \lesssim\frac{(c-b-u+1)}{n}u^{-\frac12}\sum_{k\ge 1} k^{-\frac52}+\frac{(c-b-u+1)}{n}u^{-\frac12}\frac{\Delta_n}{\delta_m^2}\sum_{l\ge 1}l^{-2}\sum_{k\ge 1} k^{-\frac72}\\
& \lesssim u^{-\frac12}\,.
\end{align*}
Inserting these bounds in the identity for $\Var(\tilde A_1)$, yields that $\Var(A_1)\lesssim u^{-1/2}$. \qed

\vspace*{.5cm}

\end{document}